\theoremstyle{plain}
\newtheorem{thm}{Theorem}[section]
\newtheorem{lem}[thm]{Lemma}
\theoremstyle{definition}
\newtheorem{rem}[thm]{Remark}
\def\Z{{\mathbb Z}}
\def\R{{\mathbb R}}
\def\1{\hbox{\rm\rlap {1}\hskip.03in{\rom I}}}
\def\Bbbone{{\rm1\mathchoice{\kern-0.25em}{\kern-0.25em}
	{\kern-0.2em}{\kern-0.2em}I}}
\begin{document}
\hyphenation{Ca-m-po}
\title[The number of framings of a knot in a $3$-manifold]
{The number of framings of a knot in a $3$-manifold}
\author[P.~Cahn]{Patricia Cahn}
\address{Department of Mathematics,
David Rittenhouse Lab,
209 South 33rd Street, University of Pennsilvania
Philadelphia, PA 19104-6395, USA }
\email{pcahn@sas.upenn.edu}
\author[V.~Chernov]{Vladimir Chernov}
\address{Department of Matehmatics, 6188 Kemeny Hall, Dartmouth College, Hanover, NH 03755, USA}
\email{Vladimir.Chernov@dartmouth.edu}
\author[R.~Sadykov]{Rustam Sadykov}
\address{Departamento de Matem\'aticas,
Cinvestav-IPN
Av. Instituto Polit\'ecnico Nacional 2508
Col. San Pedro Zacatenco
M\'exico, D.F., C.P.~07360, M\'exico}
\email{rstsdk@gmail.com}

\begin{abstract}
In view of the self-linking invariant, the number $|K|$ of framed knots in $S^3$ with given underlying knot $K$ is infinite. In fact, the second author previously defined affine self-linking invariants and used them to show that $|K|$ is infinite for every knot in an orientable manifold unless the manifold contains a connected sum factor of $S^1\times S^2$; the knot $K$  need not be zero-homologous and the manifold is not required to be compact. 

We show that when $M$ is orientable, the number $|K|$ is infinite unless $K$ intersects a non-separating sphere at exactly one point, in which case $|K|=2$; the existence of a non-separating sphere implies that $M$ contains a connected sum factor of $S^1\times S^2$. 

For knots in nonorientable manifolds we show that if $|K|$ is finite, then $K$ is disorienting, or there is an isotopy from the knot to itself which changes the orientation of its normal bundle, or  it intersects some embedded $S^2$ or $\R P^2$ at exactly one point, or it intersects some embedded $S^2$ at exactly two points in such a way that a closed curve consisting of an arc in $K$ between the intersection points and an arc in $S^2$ is disorienting.  



\end{abstract}
 
\maketitle

\leftline {\em \Small 2000 Mathematics Subject Classification.
Primary: 57M27}

\leftline{\em \Small Keywords: framed knots, self-linking number, Dehn twist}

\section{Introduction} 
We work in the smooth category where the word ``smooth'' means $C^{\infty}.$ Throughout the paper $M$ denotes a manifold of dimension $3$. It is not necessarily compact, orientable or without boundary. 
A \emph{knot} in $M$ is an embedding of a circle $S^1$. A \emph{framing} of a knot is a normal vector field along the knot in $M$.  
An {\em isotopy} of ordinary (respectively, framed) knots is a path in the space of 
ordinary (respectively, framed) embedded connected curves.

For an unframed knot $K$ in $M$ we are interested in the number $|K|$ of isotopy classes of framed knots that correspond to the isotopy class of $K$ when one forgets the framing. Given a framed knot $K_f$ and $i\in \Z$, let $K_f^i$ denote the framed knot obtained by adding $i$ extra positive full-twists to the framing of $K_f$, if $i\geq 0$; and by adding $|i|$ extra negative twists to the framing of $K_f$ if $i<0$. Clearly every isotopy class of framed knots corresponding to $K$ contains $K_f^i$ for some $i\in \Z$. However what could happen, and actually does happen for knots in some manifolds, is that $K_f^i$ and $K_f^j$ are isotopic for $i\neq j$. In this case of course
$|K|$ is finite.

When $K$ is a classical knot in $\R^3$ or, more generally, when $K$ is a zero-homologous knot in an orientable manifold, the number of framings $|K|$ is infinite as  the framed knots $K^i_f$ can be distinguished by the self-linking numbers. On the other hand, self-linking numbers do not exist when the knot is nonzero-homologous. Still intuitively one expects that $|K|=\infty$ for many $K\subset M$. 
The second author~\cite{ChernovFramed1} used the technique of Vassiliev-Goussarov invariants to define the affine self-linking number that generalizes the self-linking number to the case of nonzero-homologous framed knots in an orientable manifold. In particular, the number of framings $|K|$ is infinite for all knots in orientable manifolds that are not realizable as a connected sum $(S^1\times S^2)\#M'$; equivalently, the number of framings of a knot in an orientable manifold $M$ is infinite  unless $M$ contains a nonseparating $2$-sphere. For compact orientable manifolds this result was stated earlier without proof by Hoste and Przytycki in their paper \cite[Remark on page 488]{HostePrzytycki}. They referred to the paper of McCullough~\cite{McCullough1}  on the mapping class group of a $3$-manifold for the idea of the proof.

If $K$ intersects a nonseparating sphere in an orientable manifold at exactly one point then $|K|=2$, see~\cite[page 487]{HostePrzytycki} and later works~\cite[Theorem 2.6]{ChernovFramed1},~\cite[Theorem 4.1.1]{ChernovLegendrian}. Note that for a framed knot $K_f$ in an orientable manifold, the framed knots $K_f$ and $K_f^1$ are not isotopic and $|K|$ is even. Indeed, in order to prove the claim it suffices to observe that the spin structure takes different values on the loops in the principle $\mathop\mathrm{SO}_3$ bundle corresponding to these two framed knots. The orientable $3$-manifold $M$ is parallelizable and the orthonormal 3-framing at a point of a framed knot consists of the framing vector, the normalized
velocity vector of the curve, and the unique unit-length vector orthogonal to them
such that the resulting 3-frame gives the chosen orientation of M. 
This argument admits a generalization for knots in nonorientable manifolds.

\begin{lem}\label{twocomponents}  For a knot $K$ in a (possibly nonorientable) manifold $M$ of dimension $3$, the number $|K|$ is either infinite or even $\geq 2.$ 
\end{lem}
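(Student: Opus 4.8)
The plan is to reduce the statement to the construction of a $\Z/2$-valued isotopy invariant of framed knots that changes by $1$ under a single full twist, and then to read off the parity. First I would record the group-theoretic skeleton. Adding a full twist commutes with isotopy: one inserts the twist in a small ball carried along by an ambient isotopy extending the given framed isotopy, so the assignment $n\cdot[K_f^i]=[K_f^{i+n}]$ is a well-defined and transitive action of $\Z$ on the set $\mathcal F_K$ of isotopy classes of framed knots lying over $K$. Consequently $\mathcal F_K\cong \Z/H$, where $H=\{\,i\in\Z:K_f^i\cong K_f^0\,\}$ is a subgroup, necessarily of the form $m\Z$. Thus $|K|$ is either infinite (when $H=0$) or equal to $m=[\Z:H]$, and the whole lemma comes down to proving $H\subseteq 2\Z$, which forces $m$ to be even and hence $|K|$ even and $\geq 2$.

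To show $H\subseteq 2\Z$ I would manufacture an isotopy invariant $\phi\colon\mathcal F_K\to\Z/2$ with $\phi(K_f^{i+1})=\phi(K_f^i)+1$. Fix a Riemannian metric. A framed knot determines, after choosing a unit vector completing the frame, a loop $\gamma_{K_f}$ in the orthonormal frame bundle $\Fr(M)$, sending a point of $K$ to the frame consisting of the unit velocity, the normalized framing, and the completing vector. A single positive full twist changes $\gamma_{K_f}$ by the generator of $\pi_1(\mathrm{SO}(3))\subset\pi_1(\mathrm O(3))$, because a $2\pi$ rotation of the framing vector about the velocity traces the nontrivial loop of $\mathrm{SO}(3)$. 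The invariant $\phi(K_f)$ is then the $\Z/2$ holonomy of $\gamma_{K_f}$ in the double cover of $\Fr(M)$ provided by a $\mathrm{Pin}^-$ structure on $M$, recording whether $\gamma_{K_f}$ lifts to a loop or to a sheet-interchanging path. For a closed $3$-manifold such a structure exists by the Wu relation $w_2(M)=w_1(M)^2$, which gives $w_2(M)+w_1(M)^2=0$. Since this cover restricts to the nontrivial cover $\mathrm{Pin}^-(3)\to\mathrm O(3)$ on each fiber, a full twist flips $\phi$, yielding $\phi(K_f^{i+1})=\phi(K_f^i)+1$.

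Two points remain, and their verification is the crux. The first is that a full twist is genuinely detected and not homotopically killed; a priori nonorientability could intervene through the boundary map $\partial\colon\pi_2(M)\to\pi_1(\mathrm{SO}(3))$. I would dispose of this by observing that for every map $f\colon S^2\to M$ the bundle $f^*TM$ is trivial: since $S^2$ is simply connected, $f$ lifts to the orientation double cover $\widetilde M$, which is an orientable and hence parallelizable $3$-manifold, so $f^*TM$ is pulled back from a trivial bundle. Thus $\partial=0$, the fiber class is nonzero in $\pi_1(\Fr(M))$, and the chosen double cover really does separate $\gamma_{K_f^0}$ from $\gamma_{K_f^1}$. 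The second point is isotopy invariance of $\phi$: realizing an isotopy $K_f^i\cong K_f^j$ by an ambient isotopy with track $h\colon S^1\times[0,1]\to M$ and pulling the frame bundle and its double cover back along $h$, the base $S^1\times[0,1]$ is homotopy equivalent to $S^1$, so the pulled-back structure exists (all obstructions lie in $H^{\geq 2}(S^1)=0$) and its holonomy agrees on the two boundary circles, giving $\phi(K_f^i)=\phi(K_f^j)$. This pullback also removes any need for $M$ to be closed or for a global $\mathrm{Pin}^-$ structure.

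I expect the main obstacle to be the careful bookkeeping in the nonorientable, orientation-reversing case. When $K$ reverses orientation the bundle $TM|_K$ is the nontrivial rank-$3$ bundle over $S^1$, so the completing vector does not close up and $\gamma_{K_f}$ is not a loop but a path in $\Fr(M)$ interchanging the two $\mathrm O(1)$-sheets; one must therefore work in the bundle of orthonormal $2$-frames $V_2(M)$, whose fiber is $V_2(\R^3)\cong\mathrm{SO}(3)$, and check both that the $\mathrm{Pin}^-$ holonomy is well defined on such sheet-interchanging paths and that a full twist still represents the nontrivial central element there. Once this identification is secured, the invariant $\phi$ is well defined and isotopy invariant with $\phi(K_f^{i+1})=\phi(K_f^i)+1$, so $H\subseteq 2\Z$ and the lemma follows.
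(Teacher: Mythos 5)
Your argument is sound in substance and rests on the same mechanism as the paper's proof: a $\Z/2$-valued class on a bundle of orthonormal frames, nontrivial on the $\mathrm{SO}(3)$ fiber, evaluated on the canonical lift of the framed knot, and flipped by one full twist. The difference is in how the class is produced. The paper works from the start with the bundle $\xi$ of orthonormal $2$-frames (so the lift is a loop even for disorienting knots, and no completing third vector is needed) and obtains the class in one stroke as $w_1$ of the line bundle $E=P\times_{\mathop\mathrm{SO}_3}V$ given by the Borel construction on the canonical line bundle over $\R P^3$; this exists globally for every $3$-manifold, with no compactness or $\mathrm{Pin}^-$ hypotheses and no analysis of $\partial\colon\pi_2(M)\to\pi_1(\mathrm{SO}(3))$. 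Your route through $\mathrm{Pin}^-$ structures, the Wu relation, and pullback over the isotopy track arrives at the same fiberwise-nontrivial double cover; in fact your track-pullback observation alone already suffices, since over a base homotopy equivalent to $S^1$ the extension $1\to\Z/2\to\pi_1\to\Z\to 1$ is central and trivial, so the required $\Z/2$ character exists with no global input --- this makes the $f^*TM$-triviality discussion redundant. Your final paragraph correctly isolates the disorienting case as the delicate one and retreats to the $2$-frame bundle $V_2(M)$, which is exactly the paper's $\xi$; had you started there, the verifications you defer (``once this identification is secured'') would disappear, as they are automatic in the Borel-construction formulation. One point deserving more care in your write-up (the paper's own proof is equally terse here): in a nonorientable $M$ the sign of a twist is not globally defined --- the paper itself remarks that $K_f^{\pm 2}$ are not well defined --- so the claim that $n\cdot[K_f^i]=[K_f^{i+n}]$ is a well-defined $\Z$-action needs justification. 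If some isotopy of $K$ to itself reverses the orientation of its normal bundle, the induced map on the $\Z$-torsor of framings is a reflection rather than a translation; that very isotopy identifies $[K_f^{+1}]$ with $[K_f^{-1}]$, which is what rescues well-definedness of your action, but the reduction $\mathcal{F}_K\cong\Z/H$ with $H$ a subgroup is precisely the point where reflections must be ruled out or handled, and the paper deals with that situation separately in Theorem~\ref{changeoforientation} rather than inside Lemma~\ref{twocomponents}.
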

\begin{proof}[Proof of Lemma~\ref{twocomponents}] Let $\xi$ be the fiber bundle over $M$ with fiber over $p\in M$ consisting of
pairs of orthonormal vectors in $T_pM$. It is a principle $\mathop\mathrm{SO}_3$-bundle. Given a framed knot
$K$ in $M$, there is a canonical lift $K'$ of $K$ to the total space $P$ of $\xi$; it is constructed by
associating with $p\in K$ the pair of the velocity vector of $K$ at $p$
and the framing vector of $K$ at $p$. Since the fiber of $\xi$ is
$\mathbb{R}P^3$, there is a canonical one-dimensional vector bundle $E$ over $P$ that
restricts over each fiber to a canonical vector bundle over
$\mathbb{R}P^3$. The vector bundle $E$ can be constructed by means of the Borel construction. Indeed, let $V$ be the total space of the canonical line bundle over $\R P^{3}$. Then $E$ is the total space of the Borel construction
\[
       P\times_{\mathop\mathrm{SO}_3} V\longrightarrow M; 
       \]
in other words $E$ is the quotient of $P\times V$ by the relation generated by equivalences $(xg, y)\sim (x, gy)$ where $g\in \mathop\mathrm{SO}_3$. 
Let $w_1$ denote the first Stiefel-Whitney class of $E$. It is a
non-trivial cohomology class in $P$. The evaluation of $w_1$ on the
lift $K'$ is an invariant of a regular homotopy of $K$. Passing from $K_f$ to $K_f$ with an extra twist of the framing changes the value of $w_1$ by one. 
This implies
the claim.
\end{proof}

We show that the 2006 work of McCullough~\cite{McCullough2} immediately implies the following.

\begin{thm}\label{main}
Let $K$ be a knot in an orientable $3$-manifold $M$.  If $|K|\neq\infty,$ then $K$ intersects some nonseparating $2$-sphere at exactly one point.
\end{thm}

In the case of a non-orientable manifold $M$
we say that a knot $K$ in $M$ is \emph{disorienting} if it is orientation reversing or equivalently if its normal bundle is non-trivial. 

\begin{thm}\label{changeoforientation}
If $K$ is disorienting or if there is an isotopy from $K$ to itself which changes the orientation of its normal bundle, then $|K|=2$. 
\end{thm}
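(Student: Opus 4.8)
The plan is to reduce the entire statement to one geometric assertion: under either hypothesis there is an isotopy of framed knots carrying $K_f$ to $K_f^2$. Granting this, the theorem is immediate from Lemma~\ref{twocomponents}. That lemma shows $K_f$ and $K_f^1$ are never isotopic, so $|K|\ge 2$; and an isotopy $K_f\simeq K_f^2$ forces $K_f^i\simeq K_f^{i+2}$ for every $i$, so that every framed knot over $K$ is isotopic to $K_f^0$ or to $K_f^1$. Hence at most two classes survive, and combined with the lower bound this gives $|K|=2$. Phrased in the language of the lemma, adding two twists changes the lift $K'$ by the square of the generator of $\pi_1(\R P^3)=\pi_1(\mathop\mathrm{SO}_3)=\Z$, which is trivial, so $K_f$ and $K_f^2$ are always \emph{homotopic} as lifts; the content of the theorem is that here this homotopy can be upgraded to an honest isotopy of knots.

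I would first dispose of the disorienting case, where the work is cleanest. Since $K$ is disorienting its normal bundle $\nu$ is non-orientable, so the unit bundle $S(\nu)$ is a Klein bottle fibered over $K=S^1$, and a framing is exactly a section of $S(\nu)$. I would trivialize $\nu$ over a complementary arc, record the gluing monodromy as a reflection, and write a section as an $S^1$-valued function $\phi$ with $\phi(1)=-\phi(0)$. Computing in the orientation double cover shows that such sections fall into exactly two homotopy classes, detected by a $\Z/2$ invariant, that adding one full twist interchanges the two classes, and that adding two full twists preserves the class. Thus $K_f$ and $K_f^2$ are homotopic through framings with $K$ held fixed; such a homotopy is in particular an isotopy of framed knots, so $K_f\simeq K_f^2$ and this case is finished.

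For the second case I would use the given self-isotopy $g_s$, $s\in[0,1]$, a loop based at $K$ in the space of unframed embeddings that reverses the orientation of $\nu$. Extending $g_s$ to an ambient isotopy $\Phi_s$ and transporting the framing produces a framing $\Phi_1(f)$ of $K$ with $K_{\Phi_1(f)}\simeq K_f$, and the task is to identify $\Phi_1(f)$ with $K_f^2$. I would carry this out in the $\mathop\mathrm{SO}_3$-bundle $P$ of Lemma~\ref{twocomponents}: the lift $K'$ meets each fiber $\R P^3$, adding a full twist moves $K'$ by the generator of $\pi_1(\R P^3)$, and I would show that the orientation reversal of $\nu$ along $g_s$ can be arranged to drag $K'$ along precisely a rotation in $\mathop\mathrm{SO}_3$ whose effect on the framing is a genuine double twist, so that $K_f\simeq K_f^2$ is realized by the ambient transport itself.

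The main obstacle is exactly this last identification. A naive transport of the framing around $g_s$ only shows that $K_f$ is isotopic to a \emph{reflected} framing, and a single such relation by itself does not bound the number of framings at all. The real point to be extracted from the hypothesis is that the reversal occurs through a rotation in $\mathop\mathrm{SO}_3$ realizing the nontrivial element of $\pi_1(\mathop\mathrm{SO}_3)$, whose square is the double twist, and that this square is trivialized by an actual isotopy of $K$ rather than merely by a homotopy of its lift. Controlling the gap between regular homotopy and isotopy here—the gap otherwise measured by the self-linking type invariants—is where the argument must do its real work, and it is precisely this gap that separates the manifolds with $|K|=2$ from those with $|K|=\infty$.
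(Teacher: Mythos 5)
Your reduction (show $K_f\simeq K_f^2$, then invoke Lemma~\ref{twocomponents} for the lower bound and parity) is sound, and your treatment of the disorienting case is correct and complete: a framing is a section of the unit normal bundle, which is a Klein bottle over $K$; writing sections in a trivialization over an arc with reflection monodromy, the integer $m$ with $\theta(1)+\theta(0)=2\pi m$ is well defined mod $2$ and is a complete homotopy invariant, a single full twist changes $m$ by one, and two twists preserve the class, so $K_f\simeq K_f^2$ rel $K$. This is a genuinely different route from the paper, which instead observes that the Dehn twist of the Klein bottle $\partial U$ along a meridian is isotopic to the twist in the opposite direction; the two arguments carry the same content (your $\Z/2$ count is exactly why the twist and its inverse agree), but yours is more explicit and self-contained.

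The second case, however, has a genuine gap, which you yourself flag in your closing paragraph. Your plan is to show that transporting the framing around the normal-orientation-reversing self-isotopy can be arranged to produce precisely a double twist, i.e.\ $\Phi_1(f)=f^2$. That identification is structurally unavailable: transport along such an isotopy is twist-\emph{reversing}, not twist-shifting --- after normalizing the base framing it carries $K_f^i$ to $K_f^{-i}$ --- so no refinement of the argument in the $\mathop{\mathrm{SO}}_3$-bundle $P$ will make the transported framing come out as $f^2$, and you concede as much when you say the ``naive transport'' only yields a reflected framing and that closing this gap ``is where the argument must do its real work.'' As written, case (ii) of your proposal proves nothing beyond that naive transport. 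The paper's proof goes the opposite way: the twist-reversal is itself the point. Under the given isotopy, $K$ with an extra twist of its framing becomes $K$ with an extra twist in the opposite direction, giving the relation $K_f^1\simeq K_f^{-1}$ directly, and it is this relation, combined with Lemma~\ref{twocomponents}, that the paper uses to conclude $|K|=2$. So where you dismiss the reflected-framing relation as useless and then search for a substitute that cannot exist, the paper's (admittedly terse) argument makes that very relation the entire proof; your skepticism about how much the reflection alone yields is a fair question to put to the paper's brevity, but it does not repair your own argument, and the theorem remains unproved in your second case.
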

\begin{proof}  If $K$ is disorienting, it suffices to observe that the Dehn twist of a Klein bottle (which is the boundary of the regular neighborhood of $K$) along a meridian in one direction is isotopic to the Dehn twist along the same meridian in the opposite direction. 

If there is an isotopy from $K$ to itself which changes the orientation of the normal bundle, then under this isotopy a knot $K$ with an extra twist of its framing will become a knot with an extra twist of its framing in the opposite direction. 
\end{proof}

{\it In the rest of the section we will consider the case where $K$ is not disorienting and it does not admit an isotopy to itself that reverses the orientation of its normal bundle. \/}

To formulate a counterpart of Theorem 1.1 in this case, we need to introduce an invariant associated with a pair of intersection points between an embedded oriented sphere $S^2\subset M$ and a framed
knot $K_f\subset M$. Without loss of generality, by slightly perturbing
$K_f$ if necessary, we may assume that $K$ intersects $S^2$
transversally. 
Let $p$ and $q$ be two intersection points between $K$ and $S^2$. Choose a closed path $\gamma$ in $M$ that follows from $p$ to $q$ along $K$ and then from $q$ to $p$ along $S^2$. We observe that either for any choice of $\gamma$ it is  disorienting or for any choice of $\gamma$ it is not.


\begin{thm}\label{th:1.4} Let $K$ be a knot in a non-orientable manifold $M$. Suppose that $K$ is not disorienting and there is no isotopy of $K$ to itself which changes the orientation of the normal bundle of $K$ and that $|K|<\infty$. Then either
\begin{description}
\item[a] the knot crosses some embedded two-sided $\R P^2$ at one point; or
\item[b] the knot crosses some embedded $2$-sphere at one point; or
\item[c] the knot $K$ crosses some embedded oriented $2$-sphere $S^2$ at two points and a closed curve consisting of an arc in $K$ between the two intersection points and an arc in $S^2$ is disorienting.  
\end{description}
\end{thm}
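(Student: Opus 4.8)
The plan is to reinterpret the finiteness of $|K|$ as a finite-order condition on a twist diffeomorphism and then feed this into McCullough's computation of the relevant part of the mapping class group. Since $K$ is not disorienting, its tubular neighbourhood is a solid torus $N=N(K)\cong S^1\times D^2$, and adding one positive full twist to the framing is realized by the meridional twist $\tau\colon M\to M$, supported in $N$ and given there by $(s,z)\mapsto(s,e^{2\pi i s}z)$; it fixes $K$ pointwise and sends $K_f$ to $K_f^1$. Thus $K_f^i$ and $K_f^j$ are isotopic as framed knots exactly when $\tau^{i-j}(K_f)$ is carried back to $K_f$ by some ambient isotopy, so that $|K|<\infty$ is equivalent to the existence of $n>0$ and a path in $\operatorname{Diff}(M)$ from $\id$ to a diffeomorphism fixing the framed knot $K_f$ whose net effect on the framing is exactly $n$ full twists. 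In other words, $|K|<\infty$ precisely when the homomorphism recording the net framing twist of $K$ as it is dragged around loops of ambient isotopies is nonzero; Lemma~\ref{twocomponents} shows its image lies in $2\Z$, so a nonzero image forces $|K|=2n$ for some $n\geq 1$.

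The heart of the argument is to identify which ambient isotopies can twist the framing. I would invoke McCullough's 2006 results, which describe the subgroup of $\pi_0\operatorname{Diff}(M)$, and the corresponding one-parameter families, generated by twists along embedded $2$-spheres and two-sided projective planes, together with the relations among them; the finite-order relations are supported precisely near such embedded spheres and $\rpp$'s, the order-two phenomenon coming from $\pi_1(\mathrm{SO}(3))=\Z/2$. Concretely, a rotation twist along an embedded $S^2$ (or two-sided $\rpp$) is the one-parameter family that rotates the collar $S^2\times[-1,1]$ (resp.\ $\rpp\times[-1,1]$) by $\mathrm{SO}(3)$, and performing it drags any strand of $K$ meeting the surface once through a full rotation, changing its framing. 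The conclusion I intend to extract is that a framing-twisting ambient isotopy, after being isotoped into McCullough's normal form, must be a composition of such sphere and projective-plane twists, each localized where $K$ meets an embedded $S^2$ or two-sided $\rpp$.

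It then remains to read off the geometry of how $K$ meets these surfaces from the requirement that the net framing twist be nonzero while $K$ returns to itself. I would organize this by the local contribution of each surface to the twisting, in the spirit of the light-bulb trick: a sphere or $\rpp$ contributes to the framing twist only through the strands of $K$ crossing it, and its contribution is governed by how those crossings are permuted and reglued as the twist is carried out. Analyzing the minimal configurations that yield a nonzero net twist should produce exactly the three possibilities: a single transverse crossing with a two-sided $\rpp$ (case~\textbf{a}); a single transverse crossing with an $S^2$ (case~\textbf{b}); or two transverse crossings $p,q$ with an oriented $S^2$ together with the disorienting condition on the closed curve built from an arc of $K$ from $p$ to $q$ and an arc of $S^2$ from $q$ to $p$ (case~\textbf{c}). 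Here the standing hypotheses are essential: because $K$ is not disorienting and admits no self-isotopy reversing its normal bundle, the two mechanisms handled in Theorem~\ref{changeoforientation} are unavailable, and this is what forces the surviving configurations to be (a)--(c) rather than the Klein-bottle or normal-reversal mechanisms.

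The hard part will be the passage in the second paragraph: making precise, from McCullough's description, that every framing-twisting ambient isotopy can be normalized into sphere and projective-plane twists sitting in controlled position relative to $K$, and that no other, more diffuse, mechanism can change the framing. A secondary difficulty is the bookkeeping in the third paragraph --- keeping the surfaces embedded, verifying two-sidedness of the $\rpp$ in (a), counting the intersections with $K$ correctly, and checking that in the two-point case the relevant closed curve is genuinely disorienting --- together with the parity control from Lemma~\ref{twocomponents}, which is what guarantees these cases are exhaustive once the Theorem~\ref{changeoforientation} mechanisms are excluded.
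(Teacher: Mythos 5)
There is a genuine gap, and it sits exactly where you flag ``the hard part'': McCullough's 2006 theorem does not say what your second paragraph needs it to say. The result of \cite{McCullough2} concerns a compact \emph{orientable} $3$-manifold admitting a homeomorphism that restricts to Dehn twists on the boundary about curves $C_1,\dots,C_n$, and concludes that each $C_i$ bounds a disc in the manifold or cobounds an incompressible annulus with some $C_j$. It is not a description of $\pi_0\operatorname{Diff}(M)$, nor of framing-twisting ambient isotopies, as generated by rotation twists along embedded $2$-spheres and two-sided projective planes; no such normal-form statement exists in the literature, and your appeal to $\pi_1(\mathop\mathrm{SO}_3)=\Z/2$ is relevant to why $|K|=2$ once a suitable surface is found (Theorem~\ref{l:1.5}), not to finding it. Even reading you charitably as invoking the boundary-twist theorem in the shape used for Theorem~\ref{main}, the version for nonorientable $M$ --- which must allow twists along two-sided M\"obius bands and, crucially, annuli meeting $\partial U$ so that the twist restricts to an \emph{even power} of the meridian twist \cite{McCullough3} --- is only stated by McCullough without proof, and the paper explicitly declines to rely on it for exactly this reason. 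Your third-paragraph ``minimal configurations'' bookkeeping also has no mechanism producing the disorienting-curve condition of case~[c]: in the argument that uses the nonorientable McCullough statement, [c] arises from the even-power annulus, capped off by two meridian discs of $U$ to form the sphere --- a specifically nonorientable phenomenon that a local light-bulb analysis at the crossings will not detect.

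The paper's self-contained proof takes a route absent from your proposal, and it is the part you would need to supply. Since $K$ is not disorienting, $K_f$ has two lifts $K_1,K_2$ to the orientation double cover $\tilde M$; the square of the ambient isotopy lifts, and Theorem~\ref{main} applied upstairs (where \cite{McCullough2} genuinely applies) yields an embedded sphere in $\tilde M$ meeting $K_1$ once, either disjoint from $K_2$ or meeting it once as well. The projected sphere in $M$ is only immersed, with double-point circles, and the bulk of the proof is a surgery argument with induction on the number of double-point components: resolving a self-intersection circle whose preimage is a single circle by attaching M\"obius bands (creating immersed copies of $\rpp$), and resolving circles with two preimage circles by exchanging discs or M\"obius bands according to whether the preimage circles are essential in the relevant cylinder, until one obtains an embedded $S^2$ or two-sided $\rpp$ meeting $K$ once, or an embedded sphere meeting $K$ twice with the disorienting arc condition (which holds because the projected arcs come from the orientation cover). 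As written, your reduction to sphere and projective-plane twists is an assertion equivalent in difficulty to the unproven nonorientable McCullough theorem, so the proposal does not close.
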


Finally, we show that in the cases $[a]$ and $[b]$ the number of framings of $K$ is $2$. 

\begin{thm}\label{l:1.5}
If $K$ intersects transversally at a unique point an embedded sphere or two-sided projective plane, then $|K|=2.$
\end{thm}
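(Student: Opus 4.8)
The plan is to combine the parity constraint of Lemma~\ref{twocomponents} with an explicit construction showing that adding two full twists to the framing can be undone by an ambient isotopy: since $|K|$ is even and $\geq 2$ or infinite, a framed isotopy between $K_f$ and $K_f^{2}$ forces $|K|=2$. For the setup, write $\Sigma$ for the embedded $S^2$ or two-sided $\rpp$, and $p$ for the unique (transverse) intersection point $K\cap\Sigma$. Since $\Sigma$ is two-sided (an embedded $S^2$ in a $3$-manifold is automatically two-sided, and $\rpp$ is so by hypothesis), it has a bicollar $N\cong\Sigma\times[-1,1]$ with $\Sigma=\Sigma\times\{0\}$. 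Taking $N$ thin and straightening the tubular neighborhood, I may arrange that $K\cap N$ is the single arc $A=\{p\}\times[-1,1]$, with the reference framing $f_0$ constant along $A$ in the product structure.

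The core of the argument is a rotation sweep combined with the Dirac belt trick. Use the $\mathrm{SO}(3)$-action on $\Sigma$ (the standard action on $S^2$, which descends to $\rpp$ as it commutes with the antipodal map), with rotation axis through $p$, so that $p$ is fixed and rotations act on the normal space $T_p\Sigma$ by the standard planar rotation. Let $\tau\colon[-1,1]\to[0,1]$ ramp monotonically from $0$ to $1$, equal to $0$ near $-1$ and to $1$ near $+1$, and define $\Phi\colon N\to N$ by $\Phi(x,t)=(R_{4\pi\,\tau(t)}\,x,\,t)$, where $R_\theta$ denotes rotation by $\theta$ about the chosen axis. Because $R_0=R_{4\pi}=\mathrm{id}$, the map $\Phi$ is the identity near $\partial N$ and extends by the identity to all of $M$; because $p$ lies on the axis, $\Phi$ fixes $A$ pointwise, while the framing vector at $(p,t)$ is rotated by $4\pi\,\tau(t)$. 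Hence $\Phi(K_{f_0})=K_{f_0}^{2}$: the underlying knot is unchanged and the framing has acquired two full twists.

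It remains to connect $\Phi$ to the identity through framed knots. Here I use that the loop $u\mapsto R_{4\pi u}$ is null-homotopic in $\mathrm{SO}(3)$, as $\pi_1(\mathrm{SO}(3))=\Z/2$ and this loop is twice a generator. Let $H\colon[0,1]\times[0,1]\to\mathrm{SO}(3)$ be a null-homotopy with $H(u,0)=R_{4\pi u}$, $H(u,1)=\mathrm{id}$, and $H(0,s)=H(1,s)=\mathrm{id}$, and set $G_s(x,t)=(H(\tau(t),s)\,x,\,t)$ on $N$, extended by the identity. Each $G_s$ is the identity near $\partial N$, hence a diffeomorphism of $M$, and $G_0=\Phi$, $G_1=\mathrm{id}$. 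Applying this ambient isotopy to $K_{f_0}$ yields a path of framed knots from $G_0(K_{f_0})=K_{f_0}^{2}$ to $G_1(K_{f_0})=K_{f_0}$, so $|K|\leq 2$. By Lemma~\ref{twocomponents} the number $|K|$ is even and at least $2$, whence $|K|=2$.

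I expect the main obstacle to be the fact that the null-homotopy of the $4\pi$ loop cannot stay inside the circle $\mathrm{SO}(2)\subset\mathrm{SO}(3)$ of rotations fixing $p$ (there the $4\pi$ loop is nontrivial, $\pi_1(\mathrm{SO}(2))=\Z$); the intermediate stages $H(\cdot,s)$ rotate about tilted axes and so genuinely move the arc $A$ and return it. This is precisely why no isotopy through product rotations fixing $\partial N$ can alter the framing, yet the belt trick can. One must therefore check that each $G_s$ still preserves the slices $\Sigma\times\{t\}$, is supported in $N$, and restricts to the identity near $\partial N$, so that the construction is a bona fide ambient isotopy of $M$ and hence an isotopy of framed knots.
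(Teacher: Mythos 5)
Your proof is correct, but it takes a genuinely different route from the paper's. You run the Dirac belt trick ambiently: slice-wise rotations of the bicollar $\Sigma\times[-1,1]$ about an axis through $p$ give a diffeomorphism $\Phi$, supported in the bicollar, that fixes $K$ pointwise and inserts two full twists into the framing, and a null-homotopy of the $4\pi$-loop in $\mathrm{SO}(3)$ (using $\pi_1(\mathrm{SO}(3))\cong\Z/2$) deforms $\Phi$ to the identity through diffeomorphisms that are the identity near $\partial N$, whence $K_{f_0}\sim K_{f_0}^2$ and, with Lemma~\ref{twocomponents}, $|K|=2$. The paper instead argues at the level of curves in the surface: it positions $K_f$ so that its trace in the slices $t\times\rpp$ is a small figure eight $\alpha(t)$ framed by the normal to $\alpha$, lifts $\alpha$ under the double cover $S^2\to\rpp$, and invokes the regular homotopy on $S^2$ from a winding-number-$0$ curve to a winding-number-$2$ curve to produce, as a movie of curves, an isotopy of $K_f$ to the knot with two extra same-sign twists; the sphere case is dismissed as ``similar and simpler'' or handled by citation to Hoste--Przytycki and Chernov. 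Both arguments ultimately rest on the same $\Z/2$ (regular homotopy classes of immersed loops on $S^2$ are governed by $\pi_1$ of the unit tangent bundle of $S^2$, which is again $\mathrm{SO}(3)$), but yours is ambient and uniform: since the rotation action descends from $S^2$ to $\rpp$, it treats the sphere and the two-sided projective plane simultaneously, with no covering trick and no appeal to the Whitney--Graustein-type classification, and it produces an explicit ambient isotopy rather than a curve-level homotopy that one must then read as a framed-knot isotopy. One small step you elide --- passing from the single relation $K_{f_0}\sim K_{f_0}^2$ to $|K|\le 2$ requires noting that the same supported isotopy applies to each $K_{f_0}^i$ once the $i$ extra twists are pushed outside the bicollar, giving $K_{f_0}^i\sim K_{f_0}^{i+2}$ for all $i$ --- but the paper's proof makes exactly the same gloss, and your normalization of the framing along the arc $A$ makes it immediate.
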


\section{Proofs}

\begin{proof}[Proof of Theorem~\ref{main}] To begin with let us assume that $M$ is a compact manifold possibly with boundary. If the number $|K|$ of famings is finite, then there is an isotopy from $K_f$ to $K_f^i$ for some $i.$ Let $U$ denote a thin solid torus neighborhood of $K$ bounded by $T^2=\partial U.$ By the isotopy extension theorem there is a diffeomorphism of $M$ which is identity in a neighborhood of the boundary components of $M$ and which is an $i$-th power of a Dehn twist along the meridian curve $C=C_1$ on $U.$ Put $D_1$ to be a meridian disk in $U$ whose boundary is the meridian curve $C_1.$ Put $M'=M\setminus \mathring{U}$. Then $\partial M'=\partial M\sqcup T^2.$

McCullough~\cite[Theorem 1]{McCullough2} says the following: Let $M'$ be  a compact orientable $3$-manifold which admits a homeomorphism which is Dehn twists on the boundary about a collection $C_1, \dots, C_n$ of simple closed curves in $\partial M'$.  Then for each $i$, either $C_i$ bounds a disk in $M'$, or for some $j\neq i$, $C_j$ and $C_i$ cobound an incompressible annulus in $M'.$

In our case the collection $C_1, \dots, C_n$ of closed simple curves consists of only one curve $C_1$, and therefore $C_1$ bounds a disk $D_2$ in $M'.$ The disks $D_1, D_2$ have a common boundary and together they form a nonseparating $2$-sphere that intersects $K$ transversally at exactly one point. This completes the proof for compact $M.$

When $M$ is noncompact it suffices to notice that the isotopy from $K_f$ to $K_f^i$ is contained within a compact submanifold of $M$, and if this submanifold contains a nonseparating $2$-sphere intersecting $K$ at exactly one point, then so does $M.$
\end{proof}

McCullough states in \cite[ page 1332]{McCullough2} without proof that 
using a more elaborate machinery his~\cite[Theorem 1]{McCullough2} mentioned above can be generalized to nonorientable manifolds adding a possibility of Dehn twists about (two-sided) M\"obius bands. 
Also when $M$ is nonorientable, an annulus could meet the boundary in such a way that a Dehn twist along it is isotopic on the boundary to an even power of a Dehn twist along one of the boundary curves.  (In this case one of the two components of the annulus boundary does not have to be an element of the collection of the curves $C_i$~\cite{McCullough3}.)

\begin{proof}[First proof of Theorem~\ref{th:1.4}] The argument is similar to one in the proof of Theorem~\ref{main} except that now we apply the generalization of the McCullough theorem to the case of nonorientable manifolds.

\begin{figure}[h]
\centering
\includegraphics[height=1.6in]{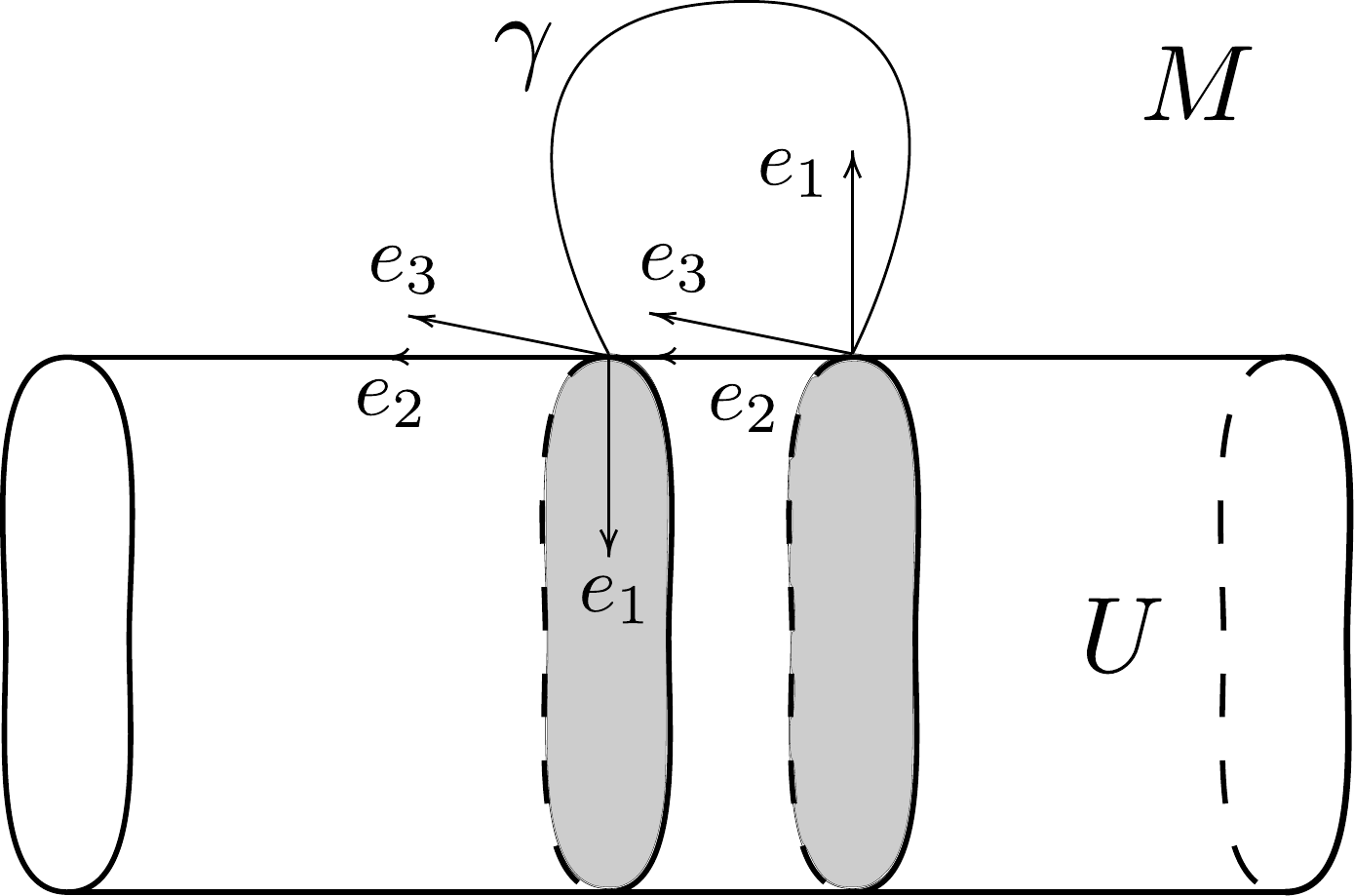}
\caption{Disorienting curve $\gamma$.}
\label{fig:3}
\end{figure}

The case of a M\"obius band bounding $C_i$ in the generalization of the McCullough theorem corresponds to the case $[a]$ in Theorem~\ref{th:1.4}. We may assume that the projective plane $\R P^2$ in $[a]$ is two-sided in $M$ since otherwise it can be replaced with the boundary $S^2\subset M$ of its tubular neighborhood; the bounding sphere $S^2$ satisfies the requirements of $[c]$.  The case of a disc bounding $C_i$ in the generalization of the McCullough theorem corresponds to $[b]$. Finally, let $U$ be a tubular neighborhood of $K$, and suppose that there exists an annulus in $M\setminus \mathring{U}$ that meets the boundary in such a way that a Dehn twist along it is isotopic to an even power of a Dehn twist of $\partial U$. Then, capping off the embedded annulus with two discs in $U$ results in a sphere satisfying the requirements of $[c]$, see Figure~\ref{fig:3}.
\end{proof}

Theorem~\ref{th:1.4} can also be proven using Theorem~\ref{main},
without assuming the generalization of the McCullough theorem to nonorientable manifolds. Since the proof of the generalization of the McCullough Theorem to nonorientable manifolds is not in the literature and appears to be quite involved, we provide an alternative proof below.

\begin{rem}In our proof below we use a lift of $K$ to the orientation double cover of $M$. If  $K$ is disorienting, then the lift disintegrates.

If there is an isotopy of $K$ which changes the orientation of its normal bundle, then the automorphism of $M\setminus \mathring U$ obtained as a result of isotopy extension is not a Dehn twist on the boundary torus of $M\setminus \mathring U$ and we cannot apply Theorem~\ref{main}. 

This is why we considered these two cases separately in Theorem~\ref{changeoforientation}.

\end{rem}
 
\begin{proof}[Second proof of Theorem~\ref{th:1.4}]  
Choose a framed knot $K_f$ corresponding to $K$. Since $|K|<\infty$, there exists an ambient isotopy $h$ of $M$ that takes $K_f$ to itself with a different framing. 

Let $\tilde M$ be the orientation double cover of $M$ and $\pi: \tilde M\to M$  the projection. Since $K$ is not disorienting, $K_f$ has two framed lifts $K_1$ and $K_2$ in $M$. Furthermore, the isotopy $h$ of $M$ taken twice lifts to an isotopy of $\tilde M$ that takes $K_1$ to itself but with a different framing. Since $\tilde M$ is orientable, by the McCullough theorem \cite[Theorem 1]{McCullough2} and the argument in the proof of Theorem~\ref{main}, either  $K_1$ intersects an embedded sphere in $\tilde M\setminus K_2$,  or each of the knots $K_1$ and $K_2$ intersects the same embedded sphere at a unique point. 

Suppose that $K_1$ intersects a sphere $S^2$ in $\tilde M\setminus K_2$. Then $\pi|S^2$ intersects $K$ transversally at a unique point. Therefore, if $U$ is a thin solid torus neighborhood of $K$, then the circle $S^1=\pi S^2\cap \partial U$ is contractible in $M\setminus \mathring{U}$. By the Loop Theorem, there is an embedded disc $D$ in $M\setminus \mathring{U}$ bounding $S^1$. Together with the disc $\pi S^2\cap U$ it forms an embedded sphere in $M$ intersecting $K$ transversally at a unique point. 

Suppose now that the knots $K_1$ and $K_2$ each intersect the same embedded sphere  $S^2\subset \tilde M$ at one point. Then $K$ crosses the projected sphere $\pi S^2$ at two points. By a general position argument, we may assume that the projected sphere is an immersed sphere with at most double points; there are no triple points since $\tilde M\to M$ is a double covering. In what follows we will denote the immersion $\pi|S^2$ by $f$. By the definition of the orientation double cover, a curve consisting of an arc in $K$ and an arc in the immersed sphere $\pi S^2$ is disorienting. 
Our aim is to modify $f$ by surgery to get an embedding of either a sphere or projective plane satisfying one of the three conclusions of Theorem~\ref{th:1.4}. 

Let $C$ be a path component of double points of the immersion $f$ in $M$. 
\begin{figure}[h]
\centering
\includegraphics[height=1.6in]{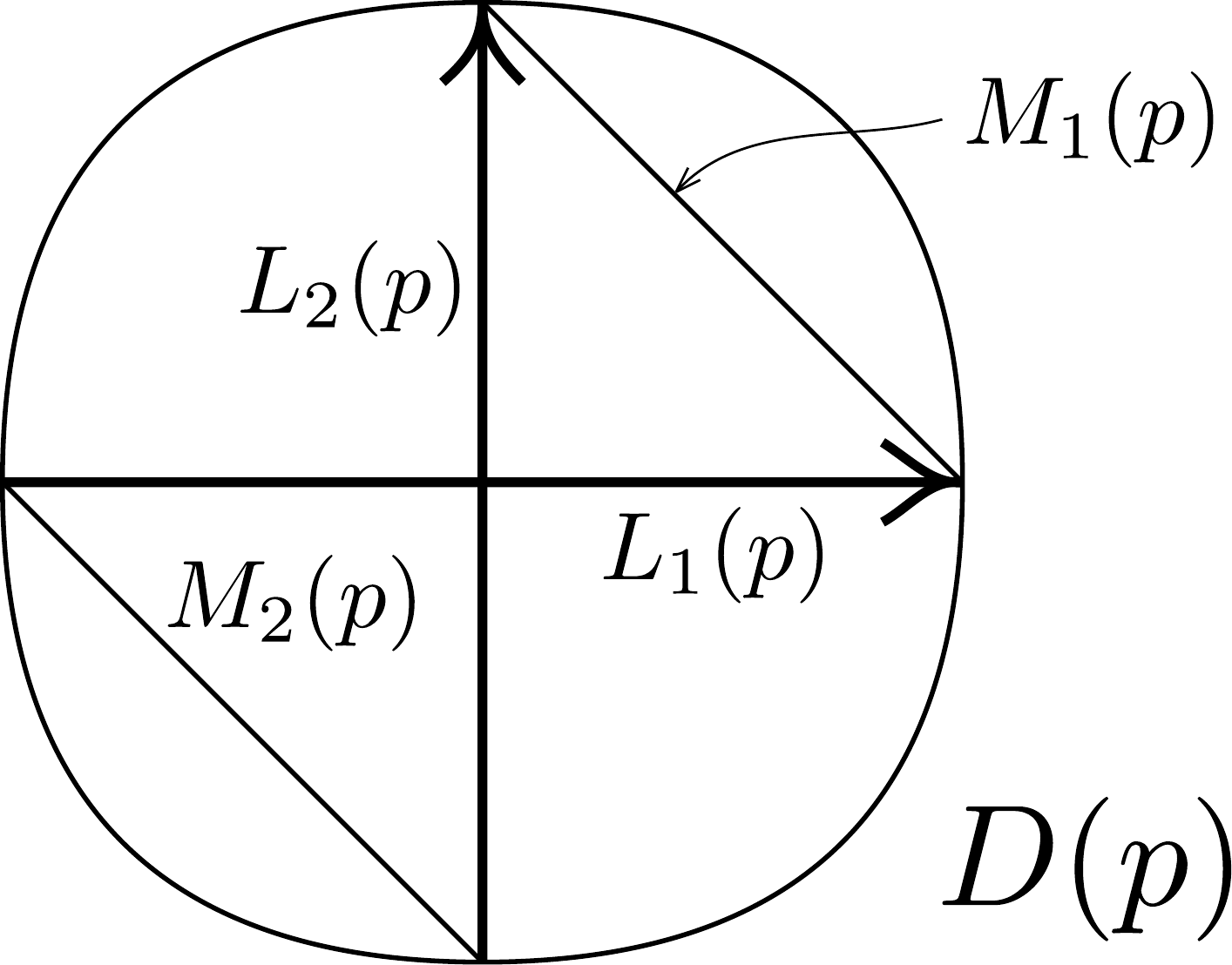}
\caption{Segments $M_1(p)$ and $M_2(p)$.}
\label{fig:1}
\end{figure}

If $f^{-1}C$ consists of one circle, then the tubular neighborhood of $C$ in $M$ is non-orientable. In particular, the component $C$ is non-contractible in $M$, and therefore $f^{-1}C$ is non-contractible in the cylinder $S^2\setminus f^{-1} \mathring U$, where $U$ is a thin solid torus neighborhood of $K$.  
 There is an obvious surgery eliminating the self-intersection circle of $f(S^2)$. Under this surgery we remove the cylindrical neighborhood $V$ of $f^{-1}(C)$ in $S^2$ and attach two M\"obius bands $M_1$ and $M_2$ along the new boundary components of $S^2$. The resulting surface $F$ consists of two copies of $\R P^2$. The new map $f': F\to M$ 
 outside $M_1$ and $M_2$  remains the same as the map $f$ outside $V$. Let us now describe $f|M_1$ and $f|M_2$.  Let $D(p)$ be a normal disc in the tubular neighborhood $U$ of $K$ passing through $p\in K$. The disc $D(p)$ intersects  $f(S^2)$ along two segments $L_1(p)$ and $L_2(p)$ that we can orient so that the monodromy along $K$ exchanges oriented segments $L_1(p)$ and $L_2(p)$. Let $M_1(p)$ and $M_2(p)$ respectively denote the two segments in $D(p)$ joining the terminal and initial points of the oriented segments $L_1(p)$ and $L_2(p)$. We define $f'$ on $M_1$ and $M_2$ so that its image is the union of all segments $M_1(p)$ and $M_2(p)$ where $p$ ranges over all points $p\in K$. 
 
 After the surgery along $C$ we obtain two immersed copies of $\R P^2$. We discard one of them. The remaining immersed  copy of $\R P^2$  
intersects $K$ transversally and the number of components of its double points is less than before the surgery. We observe that the restriction of $f'$ to the M\"obius band $M_i$ in the remaining copy of $\R P^2$ is an embedding. Therefore, if $f'$ has a component $C'$ of double points whose inverse image $f'^{-1}C'$ consists of one circle, then $f'^{-1}C'$ is a non-trivial circle in the cylinder $\R P^2\setminus (M_i\cup f'^{-1}\mathring{U})$ and therefore the argument can be iterated. 

Eventually we obtain an immersion of a surface to $M$---which we continue to denote by $f$---such that for each component $C$ of double points of $f$ the inverse image $f^{-1}(C)$ consists of two circles $C_1$ and $C_2$. The source manifold of $f$ is either $\R P^2$ or $S^2$ depending on wether we performed at least one surgery attaching a M\"obius band or not. 
{\it To simplify notation, we will assume that the source manifold of $f$ is $\R P^2$; the case of $S^2$ is similar.  \/}

As above we deduce that $C_1$ and $C_2$ belong to the cylinder $Q$ obtained from $\R P^2$ by removing $f^{-1}\mathring{U}$ and the attached M\"obius band. There are several possible cases that we need to consider.

\begin{figure}[h]
\centering
\includegraphics[height=1.8in]{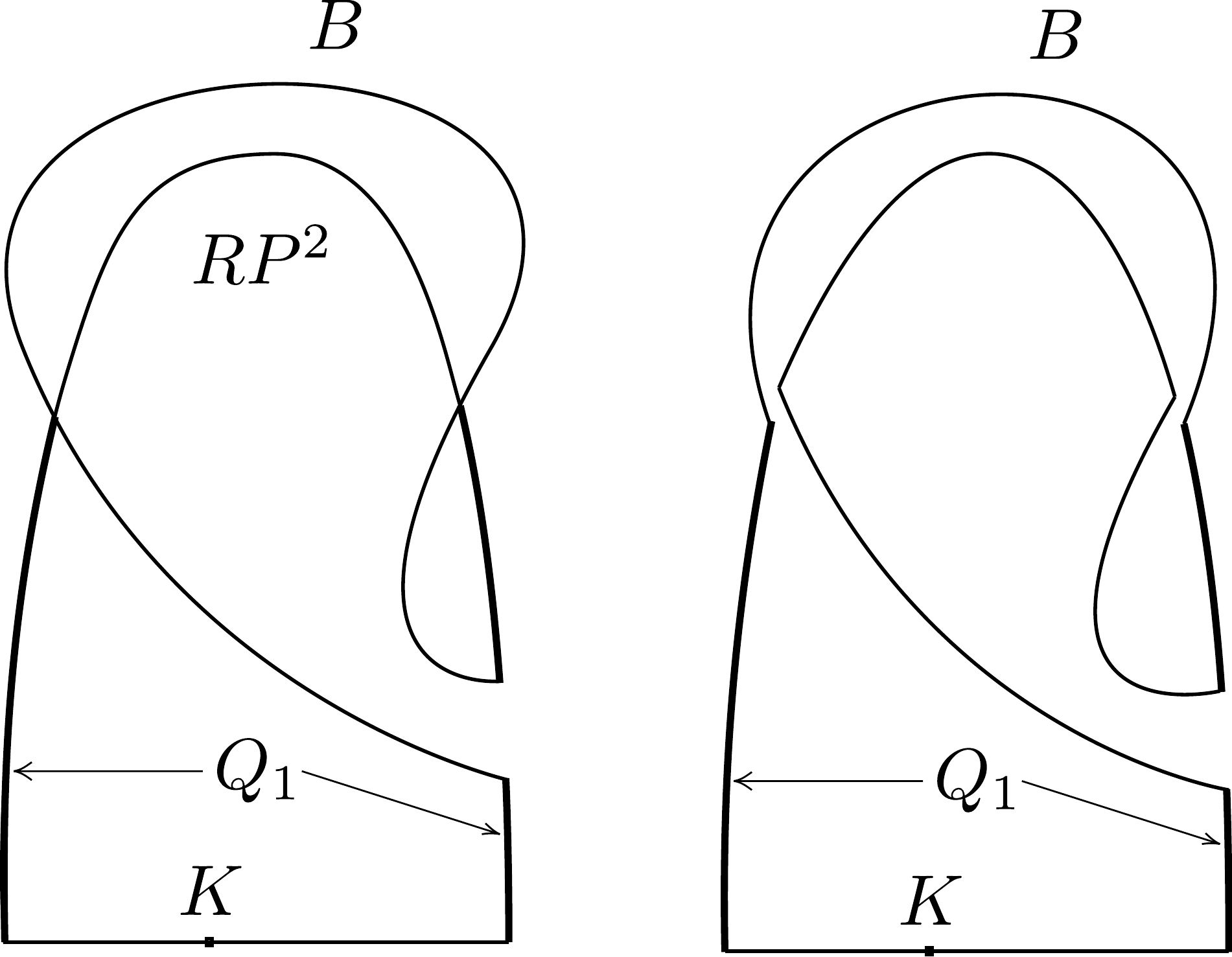}
\caption{The first case.}
\label{fig:2}
\end{figure}

{\it First, if $C_1$ is non-trivial (noncontractible) in $Q$ and $C_2$ is trivial,\/} then $C_1$ breaks the cylinder $Q$ into two closed cylinders $Q_1$ and $Q_2$.
Let $Q_1$ be the cylinder whose boundary intersects the boundary of $f^{-1}(U).$ Now $C_2$ can lie in either $Q_1$ or $Q_2$. 

First suppose $C_2$ is in $Q_1.$ Then we do surgery which exchanges the M\"obius strip bounded by $C_1$ and the disc bounded by $C_2.$ The result is $\R P ^2$ intersected at one point by $K$ but with fewer number of double point components.

Now suppose that $C_2$ is in $Q_2.$ Then we do surgery that glues the disk bounded by $C_2$ to $C_1.$ The result is $S^2$ intersected at one point by $K$ with a fewer number of double point components.


{\it Second, if both $C_1$ and $C_2$ are non-trivial in $Q$,\/} then each of $C_1$ and $C_2$ cut $Q$ into two cylinders.   
There is a surgery resulting in an immersion of a copy of $\R P^2$ intersecting $K$ at a unique point and a closed surface in $M\setminus K$. We discard the closed surface and obtain a new map $f$ with fewer number of components of double points. 

{\it Finally if both $C_1$ and $C_2$ are trivial in $Q$,\/} then $C_1$ cuts $Q$ into a disc $D_1$ and its complement $Q\setminus D_1$, while $C_2$ cuts $Q$ into a disc $D_2$ and its complement $Q\setminus D_2$. There is a surgery resulting in a map of a copy of the cylinder obtained from $Q$ by exchanging the two discs $D_1$ and $D_2$. The result is $\R P ^2$ intersected at one point by $K$ but with fewer number of double point components.

We induct on the number of double point components and after finitely many steps we obtain a desired embedded surface. 
\end{proof}

\begin{rem}  The above proof of Theorem \ref{th:1.4} produces an embedded sphere or projective plane $S$ in $M$ which is the projection of a sphere or projective plane $\tilde S\subset \tilde M$, where $\tilde S$ intersects each lift $K_i$ of $K_f$ at most once.  If $S$ is a sphere then $\tilde S$ is a sphere as well.  If $S$ is a sphere that intersects $K_f$ in two points, it must satisfy the conditions of $[c]$.  Suppose $S$ does not satisfy $[c]$. Then every loop $\gamma$ of the form $\gamma_1\gamma_2$, where $\gamma_1$ is a path from $p$ to $q$ along $K$ and $\gamma_2$ is a path from $q$ to $p$ along $S$, would lift to a loop in $\tilde M$, and hence $S$ would lift to a sphere intersected twice by $K_1$ or twice by $K_2$. Since $\tilde S$ intersects each $K_i$ at most once, this is impossible.

\end{rem}


\begin{proof}[Proof of Theorem~\ref{l:1.5}] 
Let us show that if case $[a]$ occurs then $|K|=2.$ Consider a thin neighborhood of $\R P^2$ that is crossed by $K_f$ at one point. We assume that this neighborhood is $[0,1]\times \R P^2.$ We draw
a curve $\alpha:[0,1]\to \R P ^2$ that is a small figure $8$ equipped with the normal vector field. We assume that $K_f$ intersects $[0,1]\times \R P^2$ as follows: the level $t\times \R P^2$ is intersected at the point $\alpha(t)$ and the direction of the framing vector of $K_f$ at this point projects to $\R P^2$ to the direction of the normal vector to $\alpha$ at $\alpha(t).$ 

Now we construct an isotopy of the knot $K_f$ to the knot with two extra twists of the framing (of the same sign) by describing a homotopy $\alpha_{\tau}, \tau\in [0,1]$ of the curve $\alpha=\alpha_0$. This isotopy will fix the framed knot outside of the set $[\epsilon, 1-\epsilon]\times \R P^2.$ 

Consider the covering $p:S^2\to \R P^2$ and one of the two lifts $\beta$ of $\alpha$ to $S^2$. This $\beta$ is a small figure $8$ having a locally well-defined winding number $0.$ On a sphere the curve with the winding number $0$ is regular homotopic to the curve with the winding number $2.$ Let $\beta_{\tau}, \tau\in[0,1]$ be this homotopy. We assume that it fixes $\beta$ on the neighborhood $\beta([0,\epsilon))\sqcup \beta((1-\epsilon, 1]).$
Put $\alpha_{\tau}=p\circ\beta_{\tau}$. One checks that the framed knot whose intersection with $[0,1]\times \R P^2$ is described by the curve $\alpha_1$ has two more twists of framings of the same sign than the knot $K_f.$
(We can not say whether this knot is $K_f^2$ or $K_f^{-2}$ because $M$ is nonorientable and $K_f^{\pm 2}$ are not well defined.) So $|K|\leq 2$. Since $|K|\geq 2$ by Lemma~\ref{twocomponents}, we have $|K|=2.$

If case $[b]$ occurs the proof is similar and simpler than the one above. Alternatively one can use the generalization of~\cite[page 487]{HostePrzytycki} and later works~\cite[Theorem 2.6]{ChernovFramed1},~\cite[Theorem 4.1.1]{ChernovLegendrian}. 
\end{proof}

{\bf Acknowledgments.}
This paper was written when the first two authors were at Max Planck Institute for Mathematics, Bonn and the authors thank the institute for its hospitality. This work was partially supported by a grant from the Simons Foundation
$\#235674$ to Vladimir Chernov and by a grant CONACYT $\#179823$ to Rustam Sadykov.

The authors would like to thank D. McCullough for valuable discussions about \cite{McCullough2}.

\end{document}